\documentclass[3p,10pt]{elsarticle}

\usepackage{amsmath,amssymb,amsthm}

\journal{Finite Fields and Their Applications}

\newtheorem{theorem}{Theorem}[section]
\newtheorem{proposition}[theorem]{Proposition}
\newtheorem{lemma}[theorem]{Lemma}
\newtheorem{corollary}[theorem]{Corollary}
\theoremstyle{remark}
\newtheorem{remark}[theorem]{Remark}

\newcommand{\cC}{\mathcal{C}}
\newcommand{\cY}{\mathcal{Y}}
\newcommand{\cE}{\mathcal{E}}
\newcommand{\F}{\mathbb{F}}
\newcommand{\Tr}{\operatorname{Tr}}
\newcommand{\Ree}{\operatorname{Re}}

\begin{document}
\begin{frontmatter}

\title{Maximal and minimal curves of the form $y^3=x^{(q^2+1)/2}+x$}

\author[IFTM]{Guilherme Dias}
\ead{guilhermedias@iftm.edu.br}

\author[imecc]{Saeed Tafazolian\corref{cor1}}
\ead{saeed@unicamp.br}
\cortext[cor1]{Corresponding author.}

\address[IFTM]{Instituto Federal do Tri\^angulo Mineiro (IFTM), Campus Campina Verde, Fazenda Campo Belo, Rodovia BR-364, Km 153, Campina Verde, MG 38270-000, Brazil}

\address[imecc]{Departamento de Matem\'atica, Instituto de Matem\'atica, Estat\'istica e Computa\c{c}\~ao Cient\'ifica (IMECC), Universidade Estadual de Campinas (UNICAMP), Rua S\'ergio Buarque de Holanda, 651, Campinas, SP 13083-859, Brazil}

\begin{abstract}
Let $p\ge 5$ be a prime with $p\equiv -1\pmod 3$, let $q=p^r$, and consider
\[
\cC:\qquad y^3=x^{(q^2+1)/2}+x
\]
over $\F_{q^6}$. We prove the exact formula
\[
\#\cC(\F_{q^6})=q^6+1+(-1)^{r+1}(q^2-1)q^3.
\]
Since $g(\cC)=(q^2-1)/2$, the curve is maximal when $r$ is odd and minimal when $r$ is even. The proof uses a birational Kummer model and an explicit Jacobi-sum point count. A congruence together with Frobenius invariance reduces the relevant Jacobi sums to cubic Gauss sums, whose sign is determined from the Fermat cubic. In particular, the maximality of $y^3=x^{13}+x$ over $\F_{5^6}$ appears as the first case of an infinite family.
\end{abstract}

\begin{keyword}
Finite fields \sep Maximal curves \sep Minimal curves \sep Superelliptic curves \sep Gauss sums \sep Jacobi sums

\MSC[2020] 11G20 \sep 11M38 \sep 14G15 \sep 14H25
\end{keyword}

\end{frontmatter}

\section{Introduction}\label{sec:intro}

Let $\F_Q$ be a finite field of square cardinality. A smooth projective curve $\mathcal X/\F_Q$ of genus $g$ is maximal (respectively minimal) when equality holds in the Hasse--Weil bound with the plus (respectively minus) sign:
\[
\#\mathcal X(\F_Q)=Q+1\pm 2g\sqrt Q;
\]
see \cite{HirschfeldKorchmarosTorres,Stichtenoth}.

Curves of the form
\[
\cC(n,m):\qquad y^n=x^m+x
\]
form a natural family of superelliptic curves over finite fields. For $n=3$, Tafazolian and Torres \cite{TafazolianTorres2014} gave a complete maximality criterion over $\F_{p^2}$ when the square root of the field size is prime. Nevertheless, the curve
\[
y^3=x^{13}+x
\]
is maximal over $\F_{5^6}=\F_{125^2}$ \cite[Remark~3.8]{DiasTafazolian2025}; the prime-field criterion does not apply because $125$ is not prime.

The aim of this note is to explain this phenomenon uniformly. Let $p\ge 5$ be prime, $p\equiv -1\pmod 3$, let $r\ge 1$, and put $q=p^r$. Our main result is the following.

\begin{theorem}\label{thm:main}
The curve
\[
\cC:\qquad y^3=x^{(q^2+1)/2}+x
\]
has genus
\[
g=\frac{q^2-1}{2}
\]
and
\[
\#\cC(\F_{q^6})=q^6+1+(-1)^{r+1}(q^2-1)q^3.
\]
Consequently, $\cC$ is maximal over $\F_{q^6}$ if $r$ is odd and minimal if $r$ is even.
\end{theorem}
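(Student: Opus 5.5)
We first compute the genus and then count points by reducing to a Kummer-type curve whose point count is given by characters. Put $m=(q^2+1)/2$; since $q^2\equiv 1\pmod 3$ we have $m\equiv 1\pmod 3$. Over $\overline{\F}_q$, write $x^m+x=x(x^{m-1}+1)$ with $m-1=(q^2-1)/2$. The polynomial $x^{m-1}+1$ is separable and has $m-1$ simple roots, and $x=0$ is also simple, so there are $m$ finite branch points. At infinity the degree $m$ is prime to $3$, so infinity is totally ramified as well. Riemann--Hurwitz for the degree-$3$ cover then gives
\[
2g-2=3(-2)+2(m+1),
\]
hence $g=m-1=(q^2-1)/2$.

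For the point count we pass to a simpler birational model. The substitution $x=1/u$, $y=v/u^{k}$ with $3k\equiv m \pmod 3$ adjusted appropriately, or alternatively the observation that $x^{m-1}=t$ lands in the cover $t\mapsto x$, should lead to a model
\[
w^3=z^{(q^2-1)/2}+1 ,
\]
or to a quotient of such a model. More directly, we count the affine points of $y^3=x(x^{m-1}+1)$ over $\F_Q$ with $Q=q^6$ by the standard character sum
\[
\sum_{x\in\F_Q}\ \sum_{\chi^3=1}\chi\bigl(x^m+x\bigr).
\]
Here $\chi$ runs over the cubic characters of $\F_Q^*$, and $3\mid Q-1$. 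The nontrivial terms are computed by writing $x(x^{m-1}+1)$ through $s=x^{m-1}$ and using that $d=\gcd(m-1,Q-1)=(q^2-1)/2$, since $(q^2-1)\mid (q^6-1)$. This expresses the affine count as $Q$ plus a combination of Jacobi sums $J(\chi\psi^{a},\chi)$, where $\psi$ ranges over characters of order dividing $d$ with $\chi\psi^{a}$ fixed by the relation $\chi^m=\chi$. Adding the single point at infinity, we expect to obtain
\[
\#\cC(\F_Q)=Q+1+\sum_{\chi}\sum_{\lambda} J(\chi,\lambda),
\]
a sum of $2g$ Jacobi sums indexed by pairs with $\chi$ cubic and $\lambda$ of order dividing $(q^2-1)/2$, subject to a suitable nontriviality condition.

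The key step is to show that every one of these Jacobi sums equals the same value $(-1)^{r+1}q^3$. For this we use the $p$-power Frobenius and the congruence $q^2\equiv 1\pmod{(q^2-1)}$. Raising characters to the $q^2$-th power fixes each $\lambda$, while $p\equiv -1\pmod 3$ sends $\chi$ to $\bar\chi$. By Stickelberger, or by a direct Gauss-sum argument, Jacobi sums whose characters have orders dividing $q^2-1$ and that are evaluated over $\F_{q^6}$ reduce, via Davenport--Hasse (lifting from $\F_{q^2}$), to pure Gauss sums. More concretely, $J(\chi,\lambda)=g(\chi)g(\lambda)/g(\chi\lambda)$, and the Gauss sums of $\lambda$ and $\chi\lambda$ lifted from $\F_{q^2}$ should cancel up to the factor $g(\chi)$ times an explicit sign. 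This leaves the cubic Gauss sum $g(\chi)$ over $\F_{q^6}$. Because $-1$ is a power of $p$ modulo $3$, this Gauss sum is pure (Stickelberger's theorem for the supersingular case) and equals $\pm q^3$. The sign is pinned down by applying the same formula to the Fermat cubic $u^3+v^3=1$, whose count over $\F_{p^{2s}}$ is known to be $p^{2s}+1-2(-1)^{s}\cdot 2 p^{s}$-type. That count fixes $g(\chi)=(-1)^{r+1}q^3$ over $\F_{q^6}$, where $q^3=p^{3r}$ and the parity of $3r$ equals that of $r$.

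Summing the $2g$ equal terms yields
\[
\#\cC(\F_{q^6})=q^6+1+(-1)^{r+1}(q^2-1)q^3,
\]
and since $2g\sqrt Q=(q^2-1)q^3$, maximality for odd $r$ and minimality for even $r$ follow. The main obstacle is showing that the $\lambda$-dependence cancels exactly. That is, we must check that each $J(\chi,\lambda)$, not merely their sum, equals $g(\chi)$ up to a sign independent of $\lambda$. To do this we will first try the Davenport--Hasse relation, combined with the Frobenius invariance $\lambda^{q^2}=\lambda$ and $\chi^{q}=\chi^{(-1)^r}$.
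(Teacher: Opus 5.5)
Your genus computation is correct, and your outline (character-sum count, each term reduced to a cubic Gauss sum, sign from the Fermat cubic) matches the paper's. However, the step you flag as the main obstacle is a genuine gap, and the tool you propose cannot close it.

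Your direct count can be carried out, and it is a legitimate alternative to the paper's Kummer model $v^{3d}=t(1-t)^d$. A cubic character $\lambda$ is trivial on the $d$-th roots of unity because $3d\mid Q-1$, so $\lambda(x)=\eta(x^d)$ for some $\eta$ with $\eta^d=\lambda$. Summing over the fibres of $x\mapsto x^d$ and using $\eta(-1)=1$ gives
\[
\#\cC(\F_Q)=Q+1+\sum_{\eta^{3d}=\varepsilon,\ \eta^d\ne\varepsilon}J(\eta,\eta^d),
\]
which is exactly the paper's sum $\sum_{3\nmid j}J(\chi^j,\chi^{dj})$. But the order of each such $\eta$ has $3$-adic valuation $1+v_3(d)>v_3(q^2-1)$, so it does not divide $q^2-1$. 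Three things follow:
\begin{enumerate}
\item Your indexing by pairs with $\lambda$ of order dividing $(q^2-1)/2$ is wrong.
\item The characters $\eta$ are not of the form $\mu\circ N_{\F_{q^6}/\F_{q^2}}$, so Davenport--Hasse lifting from $\F_{q^2}$ is unavailable.
\item You give no reason why $g(\lambda)$ and $g(\chi\lambda)$ would cancel.
\end{enumerate}

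The missing idea is essentially the opposite of the invariance $\lambda^{q^2}=\lambda$ you planned to use: Frobenius does \emph{not} fix $\eta$, it multiplies it by a cubic character. Since $q^2=2d+1$ and $3\mid d$, one has $q^4\equiv d+1\pmod{3d}$. Hence $\eta^{d+1}=\eta^{q^4}$, and $g(\eta\,\eta^d)=g(\eta)$ by the Frobenius invariance $g(\chi^{p^k})=g(\chi)$. Therefore
\[
J(\eta,\eta^d)=\frac{g(\eta)\,g(\eta^d)}{g(\eta^{d+1})}=g(\eta^d)
\]
holds term by term, which is what makes all $2d$ terms equal.

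From there your remaining steps work as in the paper. From $\lambda^p=\bar\lambda$ you get $g(\lambda)=g(\bar\lambda)=\overline{g(\lambda)}$, so $g(\lambda)=\pm q^3$, and $J(\lambda,\lambda)=g(\lambda)$. The sign then comes from the Fermat cubic. One correction there: the Fermat cubic has $p^{2s}+1-2(-1)^s p^s$ points over $\F_{p^{2s}}$, without your extra factor $2$.
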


The proof is short once the curve is written in a suitable Kummer form. With
\[
d=\frac{q^2-1}{2},
\]
we obtain the birational model
\[
v^{3d}=t(1-t)^d.
\]
Its rational points are expressed through Jacobi sums. The congruence
\[
q^4\equiv d+1\pmod{3d}
\]
and Frobenius invariance of Gauss sums reduce every relevant term to a cubic Gauss sum. The remaining sign is read off from the maximal/minimal behavior of the Fermat cubic. The role of the condition $p\equiv -1\pmod 3$ is discussed in Remark~\ref{rem:p1mod3}.

We also place the genus $(q^2-1)/2$ in the known spectrum of genera arising from Hermitian quotients. Thus the novelty is not the occurrence of this genus itself, but rather the explicit infinite superelliptic family, the exact point-count formula valid for every $r\ge 1$, and the resulting parity-dependent transition between maximality and minimality. In particular, the construction gives a uniform explanation for a previously isolated maximal example while remaining entirely explicit at the level of equations and character sums.

\section{Character-sum preliminaries}\label{sec:prelim}

Let $Q=p^f$ and let $\varepsilon$ denote the trivial multiplicative character of $\F_Q$. Nontrivial multiplicative characters are extended to $\F_Q$ by $\chi(0)=0$. We use the standard additive character
\[
\zeta(u)=\exp\!\left(\frac{2\pi i}{p}\Tr_{\F_Q/\F_p}(u)\right),
\]
and write
\[
g(\chi)=\sum_{u\in\F_Q}\chi(u)\zeta(u),
\qquad
J(\chi_1,\chi_2)=\sum_{u\in\F_Q}\chi_1(u)\chi_2(1-u).
\]

We shall use the following standard facts \cite[Secs.~8.1, 8.3]{IrelandRosen}. If $\chi$ has exact order $n\mid(Q-1)$, then for $a\ne 0$,
\begin{equation}\label{eq:roots}
\#\{x\in\F_Q:x^n=a\}=\sum_{j=0}^{n-1}\chi^j(a),
\end{equation}
while for nontrivial $\chi_1,\chi_2,\chi_1\chi_2$,
\begin{equation}\label{eq:JG}
J(\chi_1,\chi_2)=\frac{g(\chi_1)g(\chi_2)}{g(\chi_1\chi_2)}.
\end{equation}
Moreover,
\[
|g(\chi)|=\sqrt Q\qquad(\chi\ne\varepsilon),
\]
and
\begin{equation}\label{eq:frob}
g(\chi^{p^k})=g(\chi)\qquad(k\ge 0).
\end{equation}
Indeed, \eqref{eq:frob} follows by applying Frobenius to the summation variable and using invariance of the trace.

We shall also use the following characterization of maximal and minimal Fermat curves.

\begin{theorem}[{\cite[Theorem~5]{minimalandmaximal}}]\label{thm:Fermat}
Let
\[
\mathcal F_N:\qquad X^N+Y^N=Z^N
\]
be the Fermat curve of degree $N$ over $\F_{p^{2s}}$. Then:
\begin{enumerate}
\item[\rm (i)] $\mathcal F_N$ is maximal over $\F_{p^{2s}}$ if and only if $N\mid p^s+1$;
\item[\rm (ii)] $\mathcal F_N$ is minimal over $\F_{p^{2s}}$ if and only if $s$ is even and there exists a divisor $e$ of $s/2$ such that $N\mid p^e+1$.
\end{enumerate}
\end{theorem}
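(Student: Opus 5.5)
The plan is to compute the Frobenius eigenvalues of $\mathcal F_N$ over $Q=p^{2s}$ through Jacobi sums, and then decide when they all coincide with $-p^s$ (maximal) or with $+p^s$ (minimal).

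\emph{Reduction to Jacobi sums.} First I would reduce to the case $N\mid Q-1$. If $N\nmid Q-1$, then $\mathcal F_N$ is dominated by $\mathcal F_{N'}$ with $N'=\gcd(N,Q-1)$ in the usual way, and the extra genus forces the point count to fall short of the Hasse--Weil extremes. A direct alternative is that the counts of points at infinity and on the axes are incompatible with $Q+1\pm 2g\sqrt Q$. Assume now $N\mid Q-1$, and let $\chi$ be a character of exact order $N$ on $\F_Q$. Counting solutions of $u+v=1$ with $u=X^N$, $v=Y^N$ via \eqref{eq:roots} gives
\[
\#\mathcal F_N(\F_Q)=Q+1-\sum_{(a,b)}J(\chi^a,\chi^b)+(\text{explicit terms}),
\]
where the sum runs over pairs $1\le a,b\le N-1$ with $a+b\not\equiv 0\pmod N$. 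There are $(N-1)(N-2)=2g$ such pairs. The Weil conjectures for $\mathcal F_N$ then identify the $2g$ Frobenius eigenvalues with the numbers $-J(\chi^a,\chi^b)$, all of absolute value $\sqrt Q=p^s$ by \eqref{eq:JG}. Hence $\mathcal F_N$ is maximal (resp.\ minimal) if and only if $J(\chi^a,\chi^b)=-p^s$ (resp.\ $+p^s$) for every admissible pair.

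\emph{Sufficiency.} Suppose $N\mid p^e+1$ with $e\mid s$, and write $s=e\gamma$. I would invoke the purity theorem of Stickelberger for the semiprimitive case: for every nontrivial character $\psi$ of order dividing $p^e+1$ on $\F_{p^{2e\gamma}}$,
\[
g(\psi)=(-1)^{\gamma-1}p^{s}
\]
when $p$ is odd, with the standard modification for $p=2$ and order-$2$ characters. Using \eqref{eq:JG}, I then get
\[
J(\chi^a,\chi^b)=\frac{g(\chi^a)g(\chi^b)}{g(\chi^{a+b})}=(-1)^{\gamma-1}p^s .
\]
For $e=s$ (so $\gamma=1$) this gives $J=p^s$, i.e.\ eigenvalue $-p^s$, which is maximality; this proves the ``if'' part of (i). For $\gamma$ even it gives $J=-p^s$, i.e.\ eigenvalue $+p^s$, which is minimality. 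Since $\gamma$ is even exactly when $e\mid s/2$, this proves the ``if'' part of (ii). The case of general $e\mid s$ with $\gamma$ odd also gives maximality, but it is subsumed: $p^e+1\mid p^s+1$ when $s/e$ is odd.

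\emph{Necessity, the main obstacle.} Here I would take all $J(\chi^a,\chi^b)=\pm p^s$ with a common sign and deduce that $-1\in\langle p\rangle\subset(\mathbb Z/N)^\times$. The key tool is Stickelberger's congruence, which computes the $\mathfrak P$-adic valuation of $g(\chi^{-a})$ as $\frac{1}{p-1}\sum_{i}\langle p^i a/N\rangle(p-1)$, a sum of base-$p$ digits. Purity of all $J(\chi^a,\chi^b)$ forces, for each prime $\mathfrak P$ above $p$, the valuation of $J$ to equal exactly $s$. The step I would try first is to apply this with $b=1$ and $a$ ranging, which should force $\sum_{i}\bigl(\{p^ia/N\}+\{p^i/N\}-\{p^i(a+1)/N\}\bigr)$ to be independent of the Galois conjugation. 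From that one deduces that the orbit of $p$ acting on $\mathbb Z/N$ is stable under negation, i.e.\ $p^t\equiv-1\pmod N$ for some $t$. Minimality of $t$, together with the integrality $t\mid s$, gives $N\mid p^t+1$. Finally, the sign computed in the sufficiency step separates the two cases: $s/t$ odd forces maximality, hence the condition $N\mid p^s+1$ in (i), while $s/t$ even forces minimality, hence (ii) with $e=t\mid s/2$.

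I expect this combinatorial analysis of the digit sums to be the delicate part.
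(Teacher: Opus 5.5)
The paper does not prove this theorem; it quotes it from the literature, so your proposal has to stand on its own. The overall plan is reasonable: count points with Jacobi sums, use the semiprimitive evaluation for sufficiency, and use Stickelberger valuations for necessity. However, there is a genuine gap in the necessity direction, exactly at the step you call delicate. The sufficiency step also quotes Stickelberger's evaluation incorrectly.

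\emph{Necessity.} Put $H=\langle p\rangle\subset(\mathbb Z/N)^\times$ and $\Phi(x)=\sum_{h\in H}\bigl(\{hx/N\}-\tfrac12\bigr)$ for $x\not\equiv0$.

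Your first step can be completed. Stickelberger together with $J=\pm p^s$ gives $\Phi(a)+\Phi(1)=\Phi(a+1)$ for $a\not\equiv 0,-1$. Also $\Phi(-1)=-\Phi(1)$. Hence $N\Phi(1)=0$ and $\Phi\equiv0$, i.e.\ $\sum_{h\in H}\{hx/N\}=|H|/2$ for every $x\not\equiv0\pmod N$.

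The sentence ``from that one deduces that the orbit of $p$ is stable under negation'' is the heart of the theorem, and it does not follow from digit-sum bookkeeping. The standard argument runs as follows.
\begin{itemize}
\item Suppose $-1\notin H$. Pick a character $\psi$ of $(\mathbb Z/N)^\times/H$ with $\psi(-1)=-1$, and let $\psi_0$ be the primitive character, of conductor $f\mid N$, inducing it.
\item Apply the identity to $x=(N/f)c$, multiply by $\overline{\psi_0}(c)$, and sum over $c\in(\mathbb Z/f)^\times$.
\item Since $\psi_0$ is trivial on the image $H_f$ of $H$, this yields $|H_f|\,B_{1,\overline{\psi_0}}=0$.
\item This contradicts $B_{1,\chi}\neq0$ for odd primitive $\chi$, i.e.\ $L(1,\chi)\neq0$.
\end{itemize}
So you need this analytic input, or a citation of the known criterion (Shioda--Katsura) that $\mathcal F_N$ is supersingular iff $p^\nu\equiv-1\pmod N$ for some $\nu$.

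The non-unit values of $x$ are indispensable here. For $N=14$, $p=11$ one has $H=\{1,9,11\}\not\ni-1$, yet $\sum_{h\in H}\{hc/14\}=3/2$ for every unit $c$. So all Gauss sums of exact order $14$ over $\F_{11^6}$ are pure, and only $x=2$ detects the failure. Once $-1\in H$ is established, the rest of your argument ($t\mid s$, and the parity of $s/t$ fixing the sign) is fine.

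\emph{Sufficiency and smaller points.} For odd $p$ the semiprimitive evaluation is not $g(\psi)=(-1)^{\gamma-1}p^s$. The sign carries an extra factor $(-1)^{\gamma(p^e+1)/m}$, where $m$ is the order of $\psi$. This factor is $-1$ whenever $\gamma$ and $(p^e+1)/m$ are both odd, not only when $m=2$. For example, over $\F_{25}$ both the quadratic and the sextic characters have Gauss sum $-5$. Taken literally with the correct quadratic value, your formula gives $J(\chi,\chi^2)=5\cdot5/(-5)=-5$ for $N=6$, $p=5$, $s=1$. That wrongly denies the maximality of the Hermitian curve $\mathcal F_6$ over $\F_{25}$.

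Your conclusion $J(\chi^a,\chi^b)=(-1)^{\gamma-1}p^s$ is nevertheless true. Over $\F_{p^{2e}}$ one has $g(\psi_1)=p^e\psi_1(y_0)$, where $y_0^{p^e-1}=-1$. Hasse--Davenport then gives $g(\psi)=(-1)^{\gamma-1}p^s\,\psi(\eta)$ for a single fixed $\eta$, and the factor $\psi(\eta)$ cancels in $g(\chi^a)g(\chi^b)/g(\chi^{a+b})$. More simply, $\mathcal F_N$ is covered by the Hermitian curve $\mathcal F_{p^e+1}$. So its Frobenius eigenvalues over $\F_{p^{2e}}$ are all $-p^e$, and over $\F_{p^{2s}}$ they are $(-p^e)^\gamma$.

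Some smaller corrections:
\begin{itemize}
\item The count is $\#\mathcal F_N(\F_Q)=Q+1+\sum J(\chi^a,\chi^b)$ with no extra terms. So maximality means $J=+p^s$ (as you use later), not $-p^s$.
\item In the reduction to $N\mid Q-1$, $\mathcal F_N$ covers $\mathcal F_{N'}$, not conversely. The operative fact is that the two curves have the same number of $\F_Q$-points while the genus strictly drops.
\item The statement implicitly needs $p\nmid N$ and $N\ge3$. For $N=2$ and $s$ odd, the conic is minimal, contrary to (ii).
\end{itemize}
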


\begin{lemma}\label{lem:cubic}
Let $Q=q^6=p^{6r}$ with $p\equiv -1\pmod 3$, and let $\lambda$ be a character of order $3$ on $\F_Q^*$. Then
\[
g(\lambda)=g(\lambda^2)=(-1)^{r+1}q^3.
\]
\end{lemma}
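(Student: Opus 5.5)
We first show that $g(\lambda)$ is real and equal to $g(\lambda^2)$, and then compute it from the Fermat cubic.

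\textbf{Reality and equality.} Since $p\equiv -1\pmod 3$, we have $\lambda^p=\lambda^{-1}=\lambda^2$. By \eqref{eq:frob}, $g(\lambda^2)=g(\lambda^p)=g(\lambda)$. The standard identity $\overline{g(\chi)}=\chi(-1)g(\bar\chi)$ holds, and $\lambda(-1)=\lambda((-1)^3)=\lambda(-1)^3$ together with $\lambda(-1)^2=\lambda(1)=1$ gives $\lambda(-1)=1$. Hence $\overline{g(\lambda)}=g(\lambda^2)=g(\lambda)$, so $g(\lambda)$ is real. Because $|g(\lambda)|=q^3$, this yields $g(\lambda)=\pm q^3$.

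\textbf{Point count of the Fermat cubic.} To fix the sign, we count points of $\mathcal F_3:X^3+Y^3=Z^3$ over $\F_Q$ with characters, using \eqref{eq:roots}. The affine curve $x^3+y^3=1$ has
\[
\sum_{a+b=1}\Bigl(\sum_{i=0}^2\lambda^i(a)\Bigr)\Bigl(\sum_{j=0}^2\lambda^j(b)\Bigr)=Q+\sum_{i,j\in\{1,2\}}J(\lambda^i,\lambda^j)
\]
points. The terms with exactly one of $i,j$ zero vanish. The terms $J(\lambda,\lambda^2)$ and $J(\lambda^2,\lambda)$ each equal $-\lambda(-1)=-1$. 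By \eqref{eq:JG} we get $J(\lambda,\lambda)=g(\lambda)^2/g(\lambda^2)=g(\lambda)$, and likewise $J(\lambda^2,\lambda^2)=g(\lambda)$. The three points at infinity (where $X^3=-Y^3$, with $3\mid Q-1$) bring the total to
\[
\#\mathcal F_3(\F_Q)=Q+1+2g(\lambda).
\]
The genus of $\mathcal F_3$ is $1$, so $\mathcal F_3$ is maximal exactly when $g(\lambda)=q^3$ and minimal exactly when $g(\lambda)=-q^3$.

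\textbf{Applying Theorem~\ref{thm:Fermat}.} We write $Q=p^{2s}$ with $s=3r$.
\begin{itemize}
\item If $r$ is odd, then $s$ is odd and $p^s\equiv(-1)^s=-1\pmod 3$. So $3\mid p^s+1$, and the curve is maximal by (i). This gives $g(\lambda)=q^3=(-1)^{r+1}q^3$.
\item If $r$ is even, then $s$ is even, and $e=1$ divides $s/2$ with $3\mid p+1$. So the curve is minimal by (ii), giving $g(\lambda)=-q^3=(-1)^{r+1}q^3$.
\end{itemize}

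The main care point is the bookkeeping in the point count. One must check that the points at infinity number exactly $3$, that the mixed Jacobi sums contribute $-2$ in total, and that this $-2$ cancels correctly against the other constant terms. An alternative route, via Stickelberger or Davenport–Hasse applied to the quadratic-extension sum over $\F_{p^2}$, is possible but unnecessary here.
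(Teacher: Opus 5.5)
Your proof is correct and follows essentially the same route as the paper: Frobenius invariance gives $g(\lambda^2)=g(\lambda)$, the Jacobi-sum count of the Fermat cubic together with $J(\lambda,\lambda)=J(\lambda^2,\lambda^2)=g(\lambda)$ gives $\#\mathcal F_3(\F_Q)=Q+1+2g(\lambda)$, and Theorem~\ref{thm:Fermat} then fixes the sign. The differences are cosmetic. You prove that $g(\lambda)$ is real beforehand via $\overline{g(\chi)}=\chi(-1)g(\bar\chi)$, whereas the paper deduces it from $\Ree J(\lambda,\lambda)=\pm q^3$ and $|g(\lambda)|=q^3$; in fact the point count alone already forces it. You also take $e=1$ instead of $e=3$ in part (ii), which is equally valid.
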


\begin{proof}
Since $p\equiv -1\pmod 3$, one has
\[
\lambda^p=\lambda^{-1}=\lambda^2.
\]
Hence \eqref{eq:frob} gives
\[
g(\lambda^2)=g(\lambda^p)=g(\lambda).
\]

Consider the Fermat cubic
\[
\cE:\qquad U^3+V^3=W^3.
\]
Apply Theorem~\ref{thm:Fermat} with $N=3$ and $s=3r$. If $r$ is odd, then $3\mid p^{3r}+1$, so $\cE$ is maximal over $\F_{p^{6r}}$. If $r$ is even, then $s=3r$ is even; taking $e=3$, we have $e\mid s/2$ and $3\mid p^3+1$, so $\cE$ is minimal over $\F_{p^{6r}}$.

With our extension of nontrivial characters at zero, the number of cube roots of $a\in\F_Q$ is
\[
1+\lambda(a)+\lambda^2(a).
\]
Thus the affine part of $\cE$ has
\[
\sum_{t\in\F_Q}
\bigl(1+\lambda(t)+\lambda^2(t)\bigr)
\bigl(1+\lambda(1-t)+\lambda^2(1-t)\bigr)
\]
points. Using
\[
J(\chi,\chi^{-1})=-\chi(-1)
\]
and $\lambda(-1)=1$, we get
\[
J(\lambda,\lambda^2)=J(\lambda^2,\lambda)=-1.
\]
Since $\cE$ has three $\F_Q$-rational points at infinity,
\[
\#\cE(\F_Q)=Q+1+J(\lambda,\lambda)+J(\lambda^2,\lambda^2).
\]
Complex conjugation sends $\lambda$ to $\lambda^2$, so
\[
J(\lambda^2,\lambda^2)=\overline{J(\lambda,\lambda)}.
\]
Hence
\[
\#\cE(\F_Q)=Q+1+2\Ree J(\lambda,\lambda),
\]
and therefore
\[
\Ree J(\lambda,\lambda)=(-1)^{r+1}q^3.
\]
On the other hand, \eqref{eq:JG} and $g(\lambda^2)=g(\lambda)$ give
\[
J(\lambda,\lambda)=\frac{g(\lambda)^2}{g(\lambda^2)}=g(\lambda).
\]
Since $|g(\lambda)|=\sqrt Q=q^3$, it follows that
\[
g(\lambda)=(-1)^{r+1}q^3,
\]
and the same equality holds for $\lambda^2$.
\end{proof}

\begin{remark}\label{rem:p1mod3}
The hypothesis $p\equiv -1\pmod 3$ enters essentially in Lemma~\ref{lem:cubic}: it implies $\lambda^p=\lambda^2$ and therefore $g(\lambda)=g(\lambda^2)$. If instead $p\equiv 1\pmod 3$, then $\lambda^p=\lambda$, so Frobenius no longer interchanges the two nontrivial cubic characters. The above argument therefore does not force the two cubic Gauss sums to have the same value. We do not address that case here.
\end{remark}

\section{Point count and proof of the main theorem}\label{sec:main}

From now on $Q=q^6$. Put
\[
m=\frac{q^2+1}{2},
\qquad
d=m-1=\frac{q^2-1}{2}.
\]
Since $q^2\equiv 1\pmod 3$ and $q$ is odd, we have $3\mid d$ and $3\nmid m$. Moreover $p\nmid d$. The polynomial
\[
f(x)=x^m+x=x(x^d+1)
\]
is separable: $f'(0)=1$, while if $x\ne 0$ and $f(x)=0$, then $x^d=-1$ and
\[
f'(x)=mx^d+1=1-m=-d\ne 0.
\]
Since $\gcd(3,m)=1$, the standard genus formula for superelliptic curves \cite[Proposition~6.3.1]{Stichtenoth} gives
\begin{equation}\label{eq:genus}
g(\cC)=\frac{(3-1)(m-1)}{2}=d=\frac{q^2-1}{2}.
\end{equation}
For completeness, the same value follows directly from Riemann--Hurwitz. The degree-three map
\[
x:\cC\longrightarrow \mathbb P^1
\]
is totally ramified above the $m$ distinct zeros of $f(x)$ and above the unique point at infinity, because $\gcd(3,m)=1$. Hence
\[
2g(\cC)-2=3(-2)+(m+1)(3-1)=2m-4,
\]
which again gives $g(\cC)=m-1=d$.

\begin{lemma}\label{lem:birat}
Over $\F_Q$, the curve $\cC$ is birational to
\[
\cY:\qquad v^{3d}=t(1-t)^d.
\]
\end{lemma}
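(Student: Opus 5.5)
The plan is to write down explicit rational maps in both directions between the function fields $\F_Q(x,y)$ of $\cC$ and $\F_Q(t,v)$ of $\cY$, and check that they are mutually inverse. The guiding observation is that raising the equation of $\cC$ to the $d$-th power gives
\[
y^{3d}=x^d(x^d+1)^d .
\]
This has the shape $t(1-t)^d$ after the substitution $t=-x^d$, since then $1-t=1+x^d$. So I would set
\[
t=-x^d,\qquad v=cy,
\]
where $c\in\F_Q$ is a fixed constant with $c^{3d}=-1$. With this choice,
\[
v^{3d}=c^{3d}y^{3d}=-x^d(1+x^d)^d=t(1-t)^d,
\]
so these formulas define a map $\cC\to\cY$.

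The first step is to show that such a $c$ exists in $\F_Q^*$. It suffices to find an element of order exactly $6d=3(q^2-1)$ in the cyclic group $\F_Q^*$, since then $c^{3d}$ has order $2$ and equals $-1$. This needs $3(q^2-1)\mid q^6-1=(q^2-1)(q^4+q^2+1)$. That divisibility holds because $q^2\equiv1\pmod 3$, so $q^4+q^2+1\equiv 0\pmod 3$.

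The map $(x,y)\mapsto(t,v)$ looks like it has degree $d$ in $x$, so the key point, and the only real obstacle, is to show that $x$ nevertheless lies in $\F_Q(t,v)$. For this I would use the original equation. It gives $y^3=x(x^d+1)=x(1-t)$, hence
\[
x=\frac{y^3}{1-t}=\frac{v^3}{c^3(1-t)},\qquad y=\frac{v}{c}.
\]
Therefore $\F_Q(x,y)=\F_Q(t,v)$ as subfields of the function field of $\cC$.

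To make this a genuine birational equivalence, I would also run the construction in reverse. Starting from $\F_Q(t,v)$ with $v^{3d}=t(1-t)^d$, define $x$ and $y$ by the formulas above. Then
\[
x^d=\frac{v^{3d}}{c^{3d}(1-t)^d}=\frac{t(1-t)^d}{-(1-t)^d}=-t,
\]
so
\[
x^m+x=x(x^d+1)=x(1-t)=\frac{v^3}{c^3}=y^3 .
\]
Thus $(x,y)$ satisfies the equation of $\cC$. The two substitutions are inverse to each other: $t=-x^d$ and $v=cy$ are recovered from the formulas for $x$ and $y$. This gives mutually inverse $\F_Q$-isomorphisms of function fields, provided $\cY$ is irreducible. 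Irreducibility I would justify from $\gcd(3d,1)=1$, since $t$ occurs to the first power, so the polynomial $v^{3d}-t(1-t)^d$ is irreducible by Eisenstein at $t$. This proves that $\cC$ and $\cY$ are birational over $\F_Q$.
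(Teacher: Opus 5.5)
Your proposal is correct and matches the paper's proof: the same substitution $t=-x^d$, $v=\beta y$ with $\beta$ of order $6d$ (which exists because $3(q^2-1)\mid q^6-1$), and the same inverse formula $x=y^3/(1-t)$ showing the function fields coincide. Your Eisenstein check at $t$, which confirms that $v^{3d}-t(1-t)^d$ is irreducible and so $\cY$ really has that function field, is a small point the paper leaves implicit.
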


\begin{proof}
Since
\[
Q-1=(q^2-1)(q^4+q^2+1)
\]
and $q^4+q^2+1\equiv 0\pmod 3$, we have
\[
6d=3(q^2-1)\mid Q-1.
\]
Choose $\beta\in\F_Q^*$ of order $6d$, so that $\beta^{3d}=-1$. Setting $t=-x^d$ and $v=\beta y$ gives
\[
v^{3d}=-(y^3)^d=-x^d(x^d+1)^d=t(1-t)^d.
\]
Conversely,
\[
y=\beta^{-1}v,
\qquad
x=\frac{y^3}{1-t},
\]
so the two function fields coincide.
\end{proof}

Fix a multiplicative character $\chi$ of exact order $3d$ on $\F_Q^*$.

\begin{proposition}\label{prop:count}
The smooth projective model of $\cY$ satisfies
\begin{equation}\label{eq:pointcount}
\#\cY(\F_Q)=Q+1+
\sum_{\substack{1\le j\le 3d-1\\3\nmid j}}
J(\chi^j,\chi^{dj}).
\end{equation}
\end{proposition}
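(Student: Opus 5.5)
Write $n=3d$, so $n\mid Q-1$ by the computation $6d\mid Q-1$ in the proof of Lemma~\ref{lem:birat}. The count splits into an affine part and the points at infinity, and the two are handled separately.

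\emph{Affine count.} By \eqref{eq:roots}, extended to $a=0$ via $\chi(0)=0$ (which correctly gives one root of $v^n=0$), the number of affine points of $\cY$ is
\[
N_{\rm aff}=\sum_{t\in\F_Q}\sum_{j=0}^{n-1}\chi^j\bigl(t(1-t)^d\bigr)
=\sum_{t}\Bigl(1+\sum_{j=1}^{n-1}\chi^j(t)\chi^{dj}(1-t)\Bigr).
\]
This rewriting is harmless at $t=0,1$, since both sides vanish for $j\ne0$ there. So
\[
N_{\rm aff}=Q+\sum_{j=1}^{n-1}J(\chi^j,\chi^{dj}).
\]
Now split the sum according to whether $3\mid j$. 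If $3\mid j$, then $dj\equiv 0\pmod{3d}$, so $\chi^{dj}=\varepsilon$. Then $J(\chi^j,\varepsilon)=\sum_{t\ne1}\chi^j(t)=-\chi^j(1)=-1$, since $\chi^j$ is nontrivial for $1\le j\le n-1$. There are $d-1$ such $j$, contributing $-(d-1)$ in total. If $3\nmid j$, both $\chi^j$ and $\chi^{dj}$ are nontrivial, because $\chi^{dj}$ has order $3$. This gives
\[
N_{\rm aff}=Q-(d-1)+\sum_{3\nmid j}J(\chi^j,\chi^{dj}).
\]

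\emph{Points at infinity and singularities.} The main obstacle is to show that the smooth model has exactly $d$ rational points over the affine part's singular locus and at infinity, beyond what the affine count already sees, so that $d$ extra points combine with $-(d-1)$ to give $+1$. I would do this with function-field valuations of $F=\F_Q(t,v)$ where $v^{3d}=t(1-t)^d$, i.e.\ with Kummer theory over $\F_Q(t)$.

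At $t=0$ the valuation of $t(1-t)^d$ is $1$, coprime to $3d$, so the place is totally ramified. This gives one rational point, matching the single affine point $(0,0)$.

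At $t=1$ the valuation is $d$, so there are $\gcd(d,3d)=d$ places, each with ramification index $3$. Their residue fields are generated by the $d$-th roots of $(v^3/(1-t))^d=t\equiv 1$, i.e.\ by the $d$-th roots of unity, which lie in $\F_Q$ since $d\mid Q-1$. So all $d$ of these places are rational, whereas the affine count contributes only the single point $(1,0)$ there. This yields $d-1$ extra points.

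At $t=\infty$ the valuation of $t(1-t)^d$ is $-(d+1)$. Since $d+1=m$ is coprime to $3d$ (as $\gcd(m,3)=1$ and $\gcd(m,d)=1$), the place is totally ramified and contributes one rational point.

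For all other $t$, the affine points of $\cY$ are smooth points because $v\ne0$ and the map is unramified there, so the affine count is correct at those $t$.

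\emph{Total.} Adding up,
\[
\#\cY(\F_Q)=N_{\rm aff}+(d-1)+1=Q+1+\sum_{3\nmid j}J(\chi^j,\chi^{dj}).
\]
As a consistency check, I would also confirm through Lemma~\ref{lem:birat} and the Riemann--Hurwitz genus computation \eqref{eq:genus} that the number of Jacobi-sum terms, $2d$, equals $2g$. That equality supports the claim that no further places need to be accounted for.
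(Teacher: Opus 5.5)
Your proof is correct and follows the paper's argument. Kummer theory gives one rational place over each of $t=0$ and $t=\infty$ and $d$ rational places over $t=1$, and \eqref{eq:roots} counts the unramified fibres. The only difference is bookkeeping: you sum over all $t\in\F_Q$ and then add the $d-1$ extra places over $t=1$ and the one at infinity, whereas the paper restricts to $t\ne 0,1$ and adds all $d+2$ places. Your route needs $\chi^{dj}$ for $3\mid j$ read as $u\mapsto\chi(u)^{dj}$, vanishing at $0$, so that both your pointwise rewriting at $t=1$ and $J(\chi^j,\chi^{dj})=-1$ hold; you do use this consistently, and the paper's restriction simply sidesteps the $\varepsilon(0)$ convention.
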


\begin{proof}
For the Kummer extension
\[
v^{3d}=t(1-t)^d,
\]
the valuation of the right-hand side at $t=0$ is $1$, so the extension has degree $3d$. The only ramified places of $\F_Q(t)$ are $t=0$, $t=1$, and $t=\infty$, with corresponding valuations
\[
1,\qquad d,\qquad -(d+1).
\]
Thus
\[
\gcd(3d,1)=1,
\qquad
\gcd(3d,d)=d,
\qquad
\gcd(3d,d+1)=1.
\]
Hence there is a unique place above $t=0$ and a unique place above $t=\infty$.

We verify that the $d$ places above $t=1$ are rational. Since $q$ is odd,
\[
q^2\equiv 1\pmod 8,
\]
so $d=(q^2-1)/2$ is even. Put $u=t-1$. Then
\[
t(1-t)^d=u^d(1+u),
\]
and locally at $u=0$,
\[
\left(\frac{v^3}{u}\right)^d=1+u.
\]
After reduction modulo $u$, the residual equation is
\[
Z^d=1.
\]
Since $d\mid Q-1$ and $p\nmid d$, it has exactly $d$ distinct roots in $\F_Q$. Hence there are exactly $d$ $\F_Q$-rational places above $t=1$. Therefore the total contribution above $t=0,1,\infty$ is $d+2$.

For $t\in\F_Q\setminus\{0,1\}$, \eqref{eq:roots} gives
\[
\#\{v:v^{3d}=t(1-t)^d\}
=
\sum_{j=0}^{3d-1}\chi^j(t)\chi^{dj}(1-t).
\]
The term $j=0$ contributes $Q-2$. If $3\mid j$ and $j\ne 0$, then $\chi^{dj}=\varepsilon$ and the inner sum is $-1$; there are $d-1$ such indices. If $3\nmid j$, both $\chi^j$ and $\chi^{dj}$ are nontrivial, and the inner sum is
\[
J(\chi^j,\chi^{dj}).
\]
Indeed, the omission of $t=0,1$ does not alter the Jacobi sum, since our nontrivial characters are extended by zero at the origin. Combining the unramified fibers with the $d+2$ rational places lying above the branch points gives
\[
(Q-2)-(d-1)+(d+2)=Q+1,
\]
which proves \eqref{eq:pointcount}.
\end{proof}

\begin{lemma}\label{lem:reduce}
For $1\le j\le 3d-1$ with $3\nmid j$, one has
\[
J(\chi^j,\chi^{dj})=g(\chi^{dj}).
\]
\end{lemma}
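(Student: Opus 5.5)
The approach is to convert the Jacobi sum into Gauss sums via \eqref{eq:JG}, then use the congruence $q^4\equiv d+1\pmod{3d}$ together with Frobenius invariance \eqref{eq:frob} to make two of the three Gauss sums coincide, so that they cancel.

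First I check the hypotheses of \eqref{eq:JG}. Since $\chi$ has exact order $3d$ and $3\nmid j$, the character $\chi^j$ is nontrivial. The character $\chi^{dj}$ has order $3/\gcd(3,j)=3$, so it is a nontrivial cubic character. The product is $\chi^j\chi^{dj}=\chi^{(d+1)j}$, and this is nontrivial because $3d\nmid (d+1)j$: indeed $\gcd(d+1,d)=1$, $d>1$, and $d\nmid j$ unless $j$ is a multiple of $d$. If $j=d$ or $j=2d$, then $j$ is divisible by $3$, since $3\mid d$, which is excluded. Hence
\[
J(\chi^j,\chi^{dj})=\frac{g(\chi^j)\,g(\chi^{dj})}{g(\chi^{(d+1)j})}.
\]

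Next comes the congruence. Set $q^2=2d+1$. Then $q^4=4d^2+4d+1$. Because $d$ is even, $4d^2=3d\cdot\tfrac{4d}{3}$ with $4d/3$ an integer (as $3\mid d$), so $4d^2\equiv 0\pmod{3d}$. Also $4d\equiv d\pmod{3d}$. Together these give $q^4\equiv d+1\pmod{3d}$. Since $\chi^{3d}=\varepsilon$, it follows that $\chi^{(d+1)j}=(\chi^j)^{q^4}=(\chi^j)^{p^{4r}}$. Applying \eqref{eq:frob} then gives $g(\chi^{(d+1)j})=g(\chi^j)$, and this common factor is nonzero because $|g(\chi^j)|=\sqrt Q$. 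Cancelling it yields $J(\chi^j,\chi^{dj})=g(\chi^{dj})$.

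The only delicate step is verifying the congruence modulo $3d$ rather than merely modulo $d$. This is where the facts that $d$ is even and divisible by $3$ are needed, and both were established earlier in the paper. Everything else is bookkeeping about which characters are nontrivial.
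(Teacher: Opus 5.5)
Your proof is correct and follows the paper's argument essentially step by step: you check that $\chi^j$, $\chi^{dj}$ and $\chi^{(d+1)j}$ are nontrivial, apply \eqref{eq:JG}, derive $q^4\equiv d+1\pmod{3d}$, and use \eqref{eq:frob} to cancel $g(\chi^{(d+1)j})$ against $g(\chi^j)$. One small correction: the evenness of $d$ plays no role in the congruence, since only $3\mid d$ is used (as in $q^4-(d+1)=d(4d+3)$), so your closing remark overstates what is needed.
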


\begin{proof}
Because $q^2=2d+1$,
\[
q^4-(d+1)=d(4d+3).
\]
Since $3\mid d$, it follows that
\begin{equation}\label{eq:congruence}
q^4\equiv d+1\pmod{3d}.
\end{equation}
For $3\nmid j$, the characters $\chi^j$, $\chi^{dj}$, and $\chi^{(d+1)j}$ are nontrivial. Indeed, $\chi^j$ is nontrivial because $1\le j\le 3d-1$, and $\chi^{dj}=(\chi^d)^j$ is nontrivial because $\chi^d$ has order $3$. Moreover,
\[
\gcd(d+1,3d)=1,
\]
so $\chi^{(d+1)j}$ is nontrivial whenever $\chi^j$ is nontrivial. Thus \eqref{eq:JG} gives
\[
J(\chi^j,\chi^{dj})
=
\frac{g(\chi^j)g(\chi^{dj})}{g(\chi^{(d+1)j})}.
\]
By \eqref{eq:congruence},
\[
\chi^{(d+1)j}=\chi^{q^4j}=(\chi^j)^{p^{4r}}.
\]
Therefore \eqref{eq:frob} gives
\[
g(\chi^{(d+1)j})=g(\chi^j),
\]
and hence
\[
J(\chi^j,\chi^{dj})=g(\chi^{dj}).
\]
\end{proof}

\begin{proof}[Proof of Theorem~\ref{thm:main}]
By Lemma~\ref{lem:birat}, the smooth projective models of $\cC$ and $\cY$ are isomorphic over $\F_Q$. Let
\[
\lambda=\chi^d.
\]
Since $\chi$ has order $3d$, the character $\lambda$ has order $3$. For every $j$ with $3\nmid j$,
\[
\chi^{dj}=\lambda^j
\]
is either $\lambda$ or $\lambda^2$. There are exactly $2d$ such values of $j$ in $\{1,\ldots,3d-1\}$. Proposition~\ref{prop:count}, Lemma~\ref{lem:reduce}, and Lemma~\ref{lem:cubic} therefore yield
\[
\#\cC(\F_Q)
=
Q+1+2d(-1)^{r+1}q^3
=
q^6+1+(-1)^{r+1}(q^2-1)q^3.
\]
Together with \eqref{eq:genus} and $\sqrt Q=q^3$, this is the upper Hasse--Weil bound for odd $r$ and the lower bound for even $r$.
\end{proof}

\begin{corollary}\label{cor:q5}
The curve
\[
y^3=x^{13}+x
\]
has genus $12$ and is maximal over $\F_{5^6}$; indeed,
\[
\#\cC(\F_{5^6})=5^6+1+24\cdot 5^3=18626.
\]
\end{corollary}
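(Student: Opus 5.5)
This corollary is the special case $p=5$, $r=1$ of Theorem~\ref{thm:main}, so the plan is to check the hypotheses and then substitute. First I check the hypotheses. $p=5$ is a prime with $p\ge 5$ and $5\equiv -1\pmod 3$. With $r=1$ we get $q=5$, and therefore
\[
m=\frac{q^2+1}{2}=\frac{26}{2}=13 .
\]
Hence the curve $y^3=x^{13}+x$ is exactly $\cC$ for $q=5$. Also $Q=q^6=5^6$, which is the field over which Theorem~\ref{thm:main} counts points.

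Next comes the genus. By \eqref{eq:genus}, $g(\cC)=(q^2-1)/2=24/2=12$.

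Then the point count. Since $r=1$ is odd, the sign is $(-1)^{r+1}=+1$, and Theorem~\ref{thm:main} gives
\[
\#\cC(\F_{5^6})=5^6+1+(5^2-1)\cdot 5^3=15625+1+24\cdot 125=15626+3000=18626 .
\]

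Finally, maximality. The Hasse--Weil upper bound is $Q+1+2g\sqrt Q=5^6+1+2\cdot 12\cdot 5^3$, since $\sqrt{5^6}=5^3$. This equals the count above, so $\cC$ is maximal, matching the "$r$ odd" clause of Theorem~\ref{thm:main}.

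There is no real obstacle. The only care needed is to make sure the exponent $(q^2+1)/2$ really specializes to $13$ and that $r=1$ yields the plus sign, both of which are immediate.
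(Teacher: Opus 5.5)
Your proposal is correct and is exactly the paper's route: the paper obtains the corollary as the case $p=5$, $r=1$ of Theorem~\ref{thm:main}. Your hypothesis check ($5\equiv -1\pmod 3$, $(q^2+1)/2=13$) and your arithmetic ($g=12$, and $5^6+1+24\cdot 5^3=18626=Q+1+2g\sqrt Q$) are both right.
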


Thus Corollary~\ref{cor:q5} recovers \cite[Remark~3.8]{DiasTafazolian2025} as the case $p=5$, $r=1$ of Theorem~\ref{thm:main}.

\begin{remark}
Theorem~\ref{thm:main} does not conflict with the prime-field classification of \cite{TafazolianTorres2014}. Here the relevant square field is
\[
\F_{q^6}=\F_{(q^3)^2},
\]
whose square root has cardinality $q^3=p^{3r}$ and is never prime. Thus the family lies genuinely outside the hypotheses of that result. The parity of $r$ is exactly what controls the sign in the point count above.
\end{remark}

\section{The genus and Hermitian quotients}\label{sec:hermitian}

For odd $r$, Theorem~\ref{thm:main} gives an $\F_{q^6}$-maximal curve of genus
\[
g=\frac{q^2-1}{2}.
\]
This value of the genus is not new in the spectrum of maximal curves. Indeed, Abd\'on and Quoos \cite{AbdonQuoos} showed that suitable $p$-subgroups
\[
G\subseteq\operatorname{Aut}(\mathcal H_{q^3})
\]
give quotient curves with
\[
g(\mathcal H_{q^3}/G)
=
\frac{p^{3r-v}(p^{3r-w}-1)}{2},
\qquad
0\le v\le 3r,
\quad
0\le w\le 3r-1.
\]
Taking $v=3r$ and $w=r$ gives
\[
g(\mathcal H_{q^3}/G)
=
\frac{p^{2r}-1}{2}
=
\frac{q^2-1}{2}
=
g(\cC).
\]
Moreover, subfields of the Hermitian function field are maximal over the same constant field; see Garcia, Stichtenoth and Xing \cite{GarciaStichtenothXing}. Thus the genus occurring in Theorem~\ref{thm:main} was already known to occur for maximal curves over $\F_{q^6}$.

This comparison is only meant to locate the genus of $\cC$ within the known spectrum. The quotient produced by the Hermitian construction and the curve considered here arise from different descriptions, and equality of genera alone gives no isomorphism between them. Nor does it imply that $\cC$ is a subcover of the Hermitian curve. It therefore remains natural to ask whether, for odd $r$, the curve
\[
\cC:\qquad y^3=x^{(q^2+1)/2}+x
\]
is covered over $\F_{q^6}$ by the Hermitian curve
\[
\mathcal H_{q^3}:\qquad Y^{q^3+1}=X^{q^3}+X.
\]
We leave this question for future investigation.

\section*{Acknowledgments}

The second author was partially supported by CNPq grant no.~302774/2025-4 and FAEPEX grant no.~3485/25. Support from FAPESP grant no.~2024/00923-6 is also acknowledged.

\end{document}